\newtheorem{theorem}{Theorem}
\theoremstyle{definition}
\newtheorem*{acknowledgment}{Acknowledgment}
\begin{document}

\title[On some extensions of Morley's trisector theorem]{On some extensions of Morley's trisector theorem}

\author[Nikos Dergiades]{Nikos Dergiades}
\address{I. Zanna 27, Thessaloniki 54643, Greece}
\email{ndergiades@yahoo.gr}

\author[Hung Quang Tran]{Tran Quang Hung}
\address{High School for Gifted Students, Hanoi University of Science, Vietnam National University, 182 Luong The Vinh Str., Thanh Xuan, Hanoi, Vietnam}
\email{tranquanghung@hus.edu.vn}

\keywords{Morley's trisector theorem, Morley's triangle, equilateral triangles, perspective triangles}
\subjclass[2010]{51-03, 51M04}

\maketitle

\begin{abstract}We establish a simple generalization for the famous theorem of Morley about trisectors in triangle with a purely synthetic proof using only angle chasing and similar triangles. Furthermore, based on the converse construction, another simple extension of Morley's Theorem is created and proven.\end{abstract}

\section{Introduction}Over one hundred years ago in 1899, Frank Morley introduced a geometric result. This result was so classic that Alexander Bogomolny once said "it entered mathematical folklore"; see \cite{1}. Morley's marvelous theorem states as follows:

\begin{theorem}[Morley, 1899]\label{thm1}The three points of intersection of the adjacent trisectors of the angles of any triangle form an equilateral triangle.
\end{theorem}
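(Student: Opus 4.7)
The plan is to use the elegant \emph{backwards} construction due to J.~H.~Conway, which reduces Morley's theorem to an angle chase supplemented by similar-triangle bookkeeping. Write the angles of $ABC$ as $3\alpha,3\beta,3\gamma$, so that $\alpha+\beta+\gamma=\pi/3$. Instead of hunting the three trisector intersections inside $ABC$ directly, I would build a copy of the configuration in reverse: start from an equilateral triangle $\Delta$ and assemble a triangle similar to $ABC$ around it out of prescribed pieces. Since a triangle is determined up to similarity by its angles, this forces $\Delta$ to coincide with the Morley triangle of $ABC$, which is exactly the content of Theorem~\ref{thm1}.

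Concretely, I would introduce six auxiliary triangles with prescribed angles. The three \emph{side} triangles, meant to sit between $\Delta$ and a side of the target triangle, have angles $(\beta,\gamma,\alpha+2\pi/3)$ and cyclic permutations (each summing to $\pi$ because $\alpha+\beta+\gamma=\pi/3$); these are exactly the angles that the subtriangle $BPC$ must carry if $P$ is the Morley vertex opposite $A$. The three \emph{vertex} triangles, to sit at the corners of the target triangle, carry angles $(\alpha,\beta+\pi/3,\gamma+\pi/3)$ and cyclic permutations. Fix the size of $\Delta$ to be unit, and scale each auxiliary triangle so that the edge intended to lie against $\Delta$ has length one; this determines each piece up to rigid motion.

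With the seven pieces in hand, I would glue them along the prescribed common sides and verify, by straightforward angle arithmetic, that the outer boundary of the resulting figure is a triangle whose angles come out to $3\alpha,3\beta,3\gamma$ at the three corners. The main obstacle is \emph{closure}: where a side triangle and an adjacent vertex triangle meet along a would-be side of the outer triangle, I must check both that their outer edges are actually collinear and that their contributed angles fit together correctly, so that the trisectors implicit in the vertex triangles really do reach the vertices of $\Delta$. This reduces to a short sine-rule identity, or equivalently to the observation that an auxiliary isoceles-type similar triangle can be inserted at each corner; this is where the bulk of the work sits. Once closure is verified, the constructed outer triangle is similar to $ABC$, $\Delta$ sits inside it as an equilateral triangle, and the prescribed angles $\alpha+\pi/3$ in the vertex triangles guarantee that the sides of $\Delta$ lie along adjacent trisectors, yielding the theorem.
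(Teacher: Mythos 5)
Your plan is Conway's reverse construction, and it is a viable route to Theorem \ref{thm1}; it is also kindred in spirit to what the paper actually does, since the proof of Theorem \ref{thm2} (of which Theorem \ref{thm1} is the special case) likewise starts from an equilateral triangle $X'Y'Z'$ and rebuilds a triangle similar to $ABC$ around it. The essential difference lies in how each argument handles closure. The paper erects on the sides of $X'Y'Z'$ three isosceles triangles with prescribed apex angles and then \emph{defines} $A'$, $B'$, $C'$ as intersections of the extended equal legs, so the figure closes by construction; the work shifts to an angle chase giving $\angle A'_1=x$, etc., and to pairs of similar sub-triangles ($\triangle DBX\sim\triangle D'B'X'$, $\triangle DCX\sim\triangle D'C'X'$) that force $\triangle XBC\sim\triangle X'B'C'$ and hence $\triangle ABC\sim\triangle A'B'C'$. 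Your version instead prescribes all seven pieces by angles and scale and must then prove that they tile a triangle --- which is precisely the step you defer.

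That deferral is a genuine gap, for two concrete reasons. First, your scaling rule (``scale each auxiliary triangle so that the edge intended to lie against $\Delta$ has length one'') does not determine the three side triangles $(\beta,\gamma,\alpha+2\pi/3)$ at all: in the finished figure the piece corresponding to $BPC$ meets $\Delta$ only at the single vertex $P$, not along an edge. Fixing its scale is the one non-obvious idea in Conway's proof: from the apex of the $(\beta,\gamma,\alpha+2\pi/3)$ piece one draws the two cevians to the base that cut off triangles with the same angles as the two adjacent corner pieces, proves that these two cevians have equal length, and normalizes that common length to $1$; only then do adjacent pieces share edges of matching length. Second, even granting the scaling, the collinearity of the outer edges and the totals $3\alpha$, $3\beta$, $3\gamma$ at the corners constitute the entire content of the theorem; ``straightforward angle arithmetic'' names the computation, but a proof must exhibit it together with the edge-length matching just described. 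None of this is difficult, and once supplied your argument is complete and correct, but as written it is a plan rather than a proof.
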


\begin{figure}[htbp]\begin{center}\scalebox{0.7}{\includegraphics{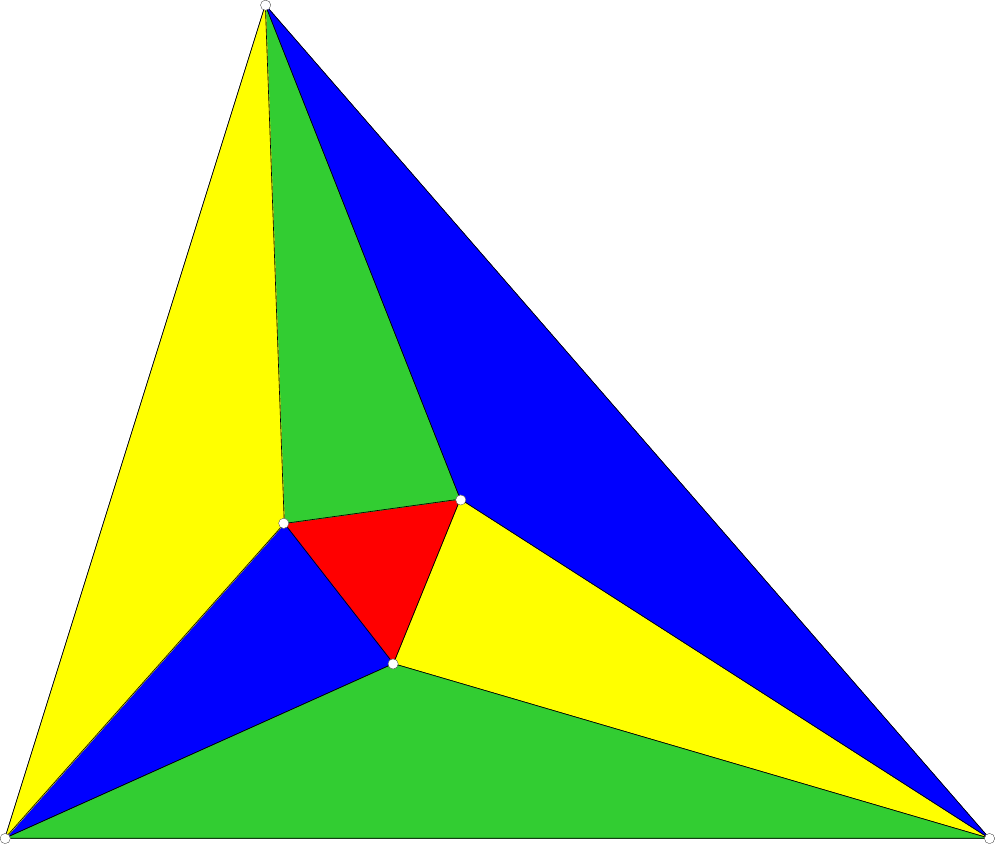}}\end{center}
	\label{fig1}
	\caption{Morley's marvelous theorem}
\end{figure}

Many mathematicians consider Morley's Theorem to be one of the most beautiful theorems in plane Euclidean geometry. Throughout history, numerous proofs have been proposed; see \cite{1,2,6,10,4,7,8}. There was a generalization of Morley's Theorem using projective geometry in \cite{9}. Some extensions to this theorem has been recently analyzed by Richard Kenneth Guy in \cite{3}. Guy's extensions were very extensive and deep research on Morley's Theorem.

In the main part of this paper, we would like to offer and prove synthetically a simple generalization of Morley's Theorem. Generalized theorem is introduced as follows:

\begin{theorem}[Generalized theorem]\label{thm2}Let $ABC$ be a triangle. Assume that three points $X$, $Y$, $Z$, and the intersection $D = BZ \cap CY$, $E = CX \cap AZ$, $F = AY \cap BX$ lie inside triangle $ABC$, they also satisfy the following conditions
	\begin{itemize}
		
		\item $\angle BXC = 120^ \circ + \angle YAZ$, $\angle CYA = 120^ \circ + \angle ZBX$, and $\angle AZB = 120^ \circ + \angle XCY$.
		
		\item The points $X$, $Y$, and $Z$ lie on the bisectors of angles $\angle BDC$, $\angle CEA$, and $\angle AFB$, respectively.
	\end{itemize}	
	Then triangle $XYZ$ is an equilateral triangle.
\end{theorem}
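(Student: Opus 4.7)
The plan is to encode the hypotheses as nine angle variables plus three sine identities, and then compute the three sides of $\triangle XYZ$ by the law of cosines in $\triangle AYZ$, $\triangle BZX$, $\triangle CXY$.

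First I would introduce the nine sub-angles at the vertices: let $\angle BAZ = \alpha_1$, $\angle ZAY = \alpha$, $\angle YAC = \alpha_3$, and analogously $\beta_1,\beta,\beta_3$ at $B$ and $\gamma_1,\gamma,\gamma_3$ at $C$. The angle sum in $\triangle BXC$ combined with the hypothesis $\angle BXC = 120^{\circ} + \alpha$ yields $\beta_3 + \gamma_3 + \alpha = 60^{\circ}$, and cyclically $\gamma_1 + \alpha_3 + \beta = 60^{\circ}$ and $\alpha_1 + \beta_1 + \gamma = 60^{\circ}$. The law of sines in $\triangle BXC$, $\triangle CYA$, $\triangle AZB$ then expresses $BX,CX,CY,AY,AZ,BZ$ in terms of these subangles and the sides of $\triangle ABC$.

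Next I would translate the bisector hypotheses into sine identities. At $D = BZ \cap CY$ one has $\angle XBD = \beta$ and $\angle XCD = \gamma$, so the bisector condition $\angle BDX = \angle CDX$, via the area identity $[DBX]:[DCX] = DB:DC$, gives $BX \sin\beta = CX \sin\gamma$. Cyclic arguments at $E$ and $F$ give $CY \sin\gamma = AY \sin\alpha$ and $AZ \sin\alpha = BZ \sin\beta$. Introducing the common values
\[
p = AY\sin\alpha = CY\sin\gamma,\quad q = AZ\sin\alpha = BZ\sin\beta,\quad r = BX\sin\beta = CX\sin\gamma,
\]
the law of cosines in $\triangle AYZ$ gives $YZ^{2}\sin^{2}\alpha = p^{2}+q^{2}-2pq\cos\alpha$, and cyclically $ZX^{2}\sin^{2}\beta = q^{2}+r^{2}-2qr\cos\beta$ and $XY^{2}\sin^{2}\gamma = p^{2}+r^{2}-2pr\cos\gamma$. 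The theorem reduces to showing these three quantities are equal.

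The hard part will be proving this final identity. Combining $BX\sin\beta = CX\sin\gamma$ with the law of sines in $\triangle BXC$ (which gives $BX/CX = \sin\gamma_3/\sin\beta_3$) produces the subangle identity $\sin\beta_3\sin\gamma = \sin\gamma_3\sin\beta$, and cyclically $\sin\gamma_1\sin\alpha = \sin\alpha_3\sin\gamma$ and $\sin\alpha_1\sin\beta = \sin\beta_1\sin\alpha$. These, together with the three $60^{\circ}$-sum relations, must be orchestrated so as to equate the three expressions for $YZ$, $ZX$, $XY$. In the classical Morley case the collapse comes from $\sin(60^{\circ}-\theta)\sin\theta\sin(60^{\circ}+\theta) = \tfrac{1}{4}\sin 3\theta$; here, the three bisector identities play the role of that symmetry. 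A purely synthetic variant, matching the ``angle chasing and similar triangles'' language of the abstract, would instead exhibit similar sub-triangles in the pieces cut out by $D$, $E$, $F$---for instance $\triangle FXY$, whose vertex at $F$ has $FZ$ along its internal bisector since $\angle AFB$ and $\angle XFY$ are vertical---and chase angles of $\triangle XYZ$ directly to $60^{\circ}$.
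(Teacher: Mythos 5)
Your translation of the hypotheses into trigonometric constraints is sound: the bisector condition at $D$ does say that $X$ is equidistant from the lines $DB$ and $DC$, which gives $BX\sin\beta=CX\sin\gamma$ (and cyclically at $E$, $F$), and the angle sums in $\triangle BXC$, $\triangle CYA$, $\triangle AZB$ do give the three $60^\circ$-relations. But the proposal stops exactly where the theorem begins. Two issues. First, a slip in the stated reduction: equality of the three displayed quantities $YZ^{2}\sin^{2}\alpha$, $ZX^{2}\sin^{2}\beta$, $XY^{2}\sin^{2}\gamma$ does \emph{not} yield $YZ=ZX=XY$, since $\alpha,\beta,\gamma$ are unequal in general (in Morley's own case they are $A/3$, $B/3$, $C/3$); what must be shown is
$$\frac{p^{2}+q^{2}-2pq\cos\alpha}{\sin^{2}\alpha}=\frac{q^{2}+r^{2}-2qr\cos\beta}{\sin^{2}\beta}=\frac{r^{2}+p^{2}-2rp\cos\gamma}{\sin^{2}\gamma}.$$
Second, and decisively, this identity --- a three-parameter generalization of the collapse $\sin\theta\sin(60^{\circ}-\theta)\sin(60^{\circ}+\theta)=\tfrac14\sin3\theta$ --- is never proved. ``Must be orchestrated'' and the closing gesture toward a synthetic variant are placeholders for the entire content of the theorem: after substituting the law-of-sines expressions for $p,q,r$ in terms of the nine subangles and the sides of $ABC$, you still face a nine-variable, six-constraint verification that is not obviously tractable and is certainly not carried out here.

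For contrast, the paper avoids the identity altogether by running the construction backwards: it starts from an equilateral triangle $X'Y'Z'$, erects outward isosceles triangles $D'Z'Y'$, $E'X'Z'$, $F'Y'X'$ whose apex angles equal $\angle YDZ$, $\angle ZEX$, $\angle XFY$ of the given figure, sets $A'=E'Z'\cap F'Y'$ etc., and checks by angle chasing that $\angle Y'A'Z'=x$, $\angle Z'B'X'=y$, $\angle X'C'Y'=z$. A chain of similarities ($\triangle DBX\sim\triangle D'B'X'$, then $\triangle XBC\sim\triangle X'B'C'$ by s.a.s., and cyclically) forces $\triangle ABC\sim\triangle A'B'C'$ and hence $\triangle XYZ\sim\triangle X'Y'Z'$, so $XYZ$ is equilateral with no trigonometric identity needed. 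To salvage your route you would have to prove the displayed identity explicitly; as written, the proposal is a correct setup but not a proof.
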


When $X$, $Y$, and $Z$ are the intersections of the adjacent trisectors of triangle $ABC$, it is easily seen that they satisfy two conditions of Theorem \ref{thm2}. Thus Theorem \ref{thm1} is a direct consequence of Theorem \ref{thm2}.

An important property of the pair of triangles $ABC$ and $XYZ$ is introduced in the following theorem:

\begin{theorem}\label{thm3}The triangles $ABC$ and $XYZ$ of the Theorem \ref{thm2} are perspective.
\end{theorem}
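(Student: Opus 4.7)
The plan is to apply the trigonometric form of Ceva's theorem, by which the cevians $AX$, $BY$, $CZ$ concur if and only if
\[
\frac{\sin\angle BAX}{\sin\angle CAX}\cdot\frac{\sin\angle CBY}{\sin\angle ABY}\cdot\frac{\sin\angle ACZ}{\sin\angle BCZ}=1.
\]
Applying the law of sines in $\triangle ABX$ and $\triangle ACX$ gives
\[
\frac{\sin\angle BAX}{\sin\angle CAX}=\frac{BX}{CX}\cdot\frac{AC}{AB}\cdot\frac{\sin\angle AXB}{\sin\angle AXC},
\]
and cyclically at $B$ and $C$. Multiplying the three, the side-length factors of $\triangle ABC$ telescope to $1$, so it suffices to establish
\[
\frac{BX\cdot CY\cdot AZ}{CX\cdot AY\cdot BZ}\cdot\frac{\sin\angle AXB\,\sin\angle BYC\,\sin\angle CZA}{\sin\angle AXC\,\sin\angle BYA\,\sin\angle CZB}=1.
\]

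To compute each ingredient I set $\alpha=\angle YAZ$, $\beta=\angle ZBX$, $\gamma=\angle XCY$. Theorem~\ref{thm2} yields that $\triangle XYZ$ is equilateral, so $XY=YZ=ZX$ and $\angle YXZ=\angle XZY=\angle ZYX=60^\circ$, which fixes the angular configuration at each vertex of $\triangle XYZ$. The hypothesis $\angle BXC=120^\circ+\alpha$ then specifies the angle at $X$ subtended by $B$ and $C$, while the bisector hypothesis (that $X$ lies on the bisector of $\angle BDC$ at $D=BZ\cap CY$) pins down $X$'s position along that bisector. Angle chasing in $\triangle BDC$, combined with the inscribed-angle condition at $X$ for the chord $BC$, then yields $\angle AXB$, $\angle AXC$, and the ratio $BX/CX$ in closed form. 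The cyclic permutation $(A, B, C, X, Y, Z, D, E, F)\mapsto(B, C, A, Y, Z, X, E, F, D)$ produces the analogous data at $Y$ and at $Z$. Substituting these six expressions into the identity displayed above, the cyclic contributions cancel and the product collapses to $1$.

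The main obstacle lies in the middle step: converting the non-local bisector condition (which involves the auxiliary intersection $D$, hence the lines $BZ$ and $CY$) into clean angle and length data localized at $X$. The chase through $\triangle BDC$ must carefully combine the angle-bisector property at $D$ with the prescribed value of $\angle BXC$ and the $60^\circ$ angles inherited from $\triangle XYZ$. Once those expressions are exhibited in a cyclic-symmetric form, the final cancellation becomes essentially formal.
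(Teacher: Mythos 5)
Your framework (trigonometric Ceva plus the law-of-sines reduction at each vertex of $ABC$) is sound, and the length-ratio factor does come out to $1$: since $DX$ bisects $\angle BDC$ while $\angle XBD=y$ and $\angle XCD=z$, the law of sines in triangles $DBX$ and $DCX$ gives $BX/CX=\sin z/\sin y$, and the cyclic product telescopes. The genuine gap sits exactly where you flag "the main obstacle": the remaining factor
\[
\frac{\sin\angle AXB\,\sin\angle BYC\,\sin\angle CZA}{\sin\angle AXC\,\sin\angle BYA\,\sin\angle CZB}
\]
is never computed, and it carries the entire content of the theorem. The hypotheses only give the sum $\angle AXB+\angle AXC=360^\circ-\angle BXC=240^\circ-\angle YAZ$; how that sum splits depends on the direction of the ray $XA$, i.e.\ on where $A$ sits globally. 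There is no "inscribed-angle condition at $X$ for the chord $BC$" in the hypotheses that would determine this split, and "angle chasing in $\triangle BDC$" cannot reach it, because $\triangle BDC$ contains no information about $A$. Asserting that "the cyclic contributions cancel and the product collapses to $1$" is a restatement of the concurrence you are trying to prove, not a derivation of it.

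For contrast, the paper sidesteps this entirely with barycentric coordinates relative to the (already proven equilateral) triangle $XYZ$: the bisector conditions make $DYZ$, $EZX$, $FXY$ isosceles, so $D=(-p:1:1)$, $E=(1:-q:1)$, $F=(1:1:-r)$; then $A=EZ\cap FY=(-1:q:r)$ and cyclically, and $AX$, $BY$, $CZ$ visibly meet at $(p:q:r)$. If you want to rescue your route, you would have to extract the directions of $XA$, $YB$, $ZC$ from the explicit similar-triangle model built in the proof of Theorem \ref{thm2} (the primed configuration), which amounts to rebuilding that construction; the coordinate computation is far shorter.
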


At the last section of this paper, we shall apply a converse construction to find another extension for Morley's Theorem. Some new equilateral triangles in a given arbitrary triangle are also found. The family of these new equilateral triangles closely relate to the construction of the Morley's equilateral triangle.

\section{Proofs of the theorems} The solutions for Theorem \ref{thm2} and Theorem \ref{thm3} will be showed in this section.

\begin{figure}[htbp]\begin{center}\scalebox{0.7}{\includegraphics{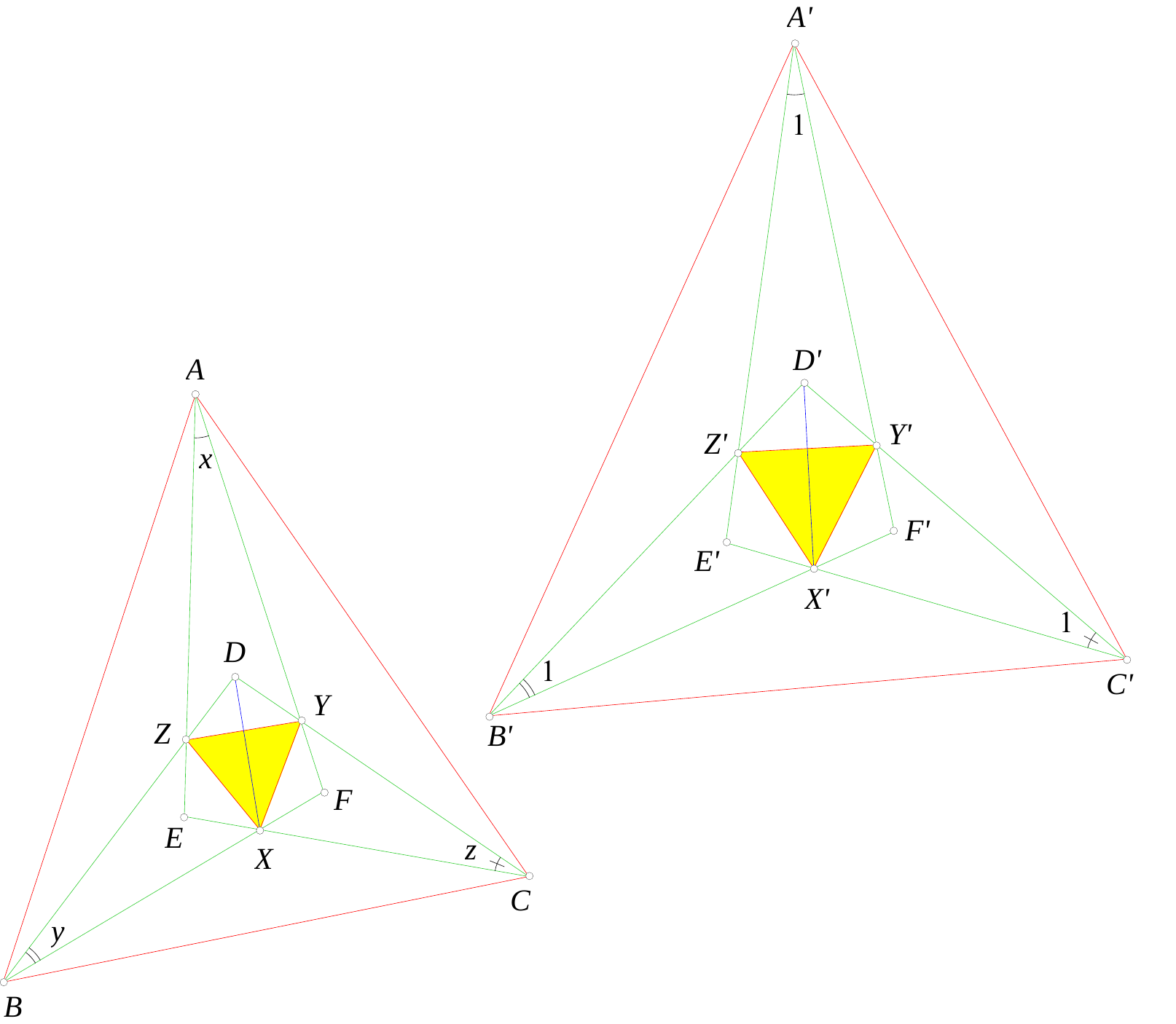}}\end{center}
	\label{fig2}
	\caption{Proof of generalized theorem}
\end{figure}

\begin{proof}[Proof of Theorem \ref{thm2}]The main idea of this proof comes from \cite{2}. (See Figure \ref{fig2}). Set $\angle YAZ = x$, $\angle ZBX = y$, and $\angle XCY = z$. Since $X$ lies inside triangle $DBC$ (because $X$ lies inside triangle $ABC$ and it lies on bisector of angle $\angle BDC$, too), we have
	$$\angle BDC = \angle BXC - y - z = 120^ \circ + x - y - z.$$ 
	
	Similarly, $\angle CEA = 120^ \circ + y - z - x$ and $\angle AFB = 120^ \circ + z - x - y$.
	
	On the sides of an equilateral triangle ${X}'{Y}'{Z}'$, the isosceles triangles ${D}'{Z}'{Y}'$, ${E}'{X}'{Z}'$, and ${F}'{Y}'{X}'$ are constructed outwardly such that $\angle Y'D'Z' = \angle YDZ$, $\angle Z'E'X' = \angle ZEX$, and $\angle X'F'Y' = \angle XFY$.
	
	Take the intersections ${A}' = {E}'{Z}' \cap {F}'{Y}'$, ${B}' = {F}'{X}' \cap D{Z}'$, ${C}' = {D}'{Y}' \cap {E}'{X}'$. From quadrilateral ${A}'{E}'{X}'{F}'$, it is deduced that
	\begin{multline*}\angle {A}'_1 = 360^\circ - \angle Z'E'X' -\\ \left[\left( {90^ \circ - \frac{\angle Z'E'X'}{2}} \right) + 60^\circ + \left(90^ \circ - \frac{\angle X'F'Y'}{2} \right)\right] - \angle X'F'Y',
	\end{multline*}
	this implies that 
	$$\angle {A}'_1 = 120^\circ - \frac{\angle Z'E'X' + \angle X'F'Y'}{2} = 120^\circ - \frac{240^\circ - 2x}{2} = x.$$
	
	An analogous argument shows that
	$$\angle {B}'_1 = y\quad\text{and}\quad\angle {C}'_1 = z.$$
	
	Since ${D}'{X}'$ is bisector of $\angle B'D'C'$ (from the constructions of isosceles triangle $D'Z'Y'$ and equilateral triangle $X'Y'Z'$), $\triangle DBX \sim \triangle {D}'{B}'{X}'$ (because they have same angles $y$, $\frac{\angle BDC}{2}$), and $\triangle DCX \sim \triangle {D}'{C}'{X}'$ (because they have same angles $z$, $\frac{\angle BDC}{2}$), we obtain 
	$$\frac{XB}{{X}'{B}'} = \frac{DX}{{D}'{X}'} = \frac{XC}{{X}'{C}'}\quad\text{or}\quad\frac{XB}{XC} = \frac{{X}'{B}'}{{X}'{C}'},$$
	and also
	$$\angle {B}'{X}'{C}' = \angle B'D'C' + \angle {B}'_1 + \angle {C}'_1 = \angle BXC.$$
	
	Two previous conditions point out that $\triangle XBC \sim \triangle {X}'{B}'{C}'$ (s.a.s). 
	
	Analogously, $\triangle YCA \sim\triangle {Y}'{C}'{A}'$, and $\triangle ZAB 
	\sim \triangle {Z}'{A}'{B}'$. 
	
	Finally, from these similar triangles, it follows that $\angle BAC = \angle B'A'C'$, $\angle CBA = \angle C'B'A'$, and $\angle ACB = \angle A'C'B'$, so $\triangle ABC \sim \triangle {A}'{B}'{C}'$.
	
	This takes us to the conclusion that $\triangle XYZ \sim\triangle {X}'{Y}'{Z}'$, it might be worth pointing out that $XYZ$ is equilateral, and completes the proof of generalized theorem.
\end{proof}

The above proof of generalized theorem also shows that Morley's Theorem can be proven simply using similar triangles and angle chasing in the same way.

The barycentric coordinates will be used in the next proof for Theorem \ref{thm3}, see \cite{5}.

\begin{figure}[htbp]\begin{center}\scalebox{0.7}{\includegraphics{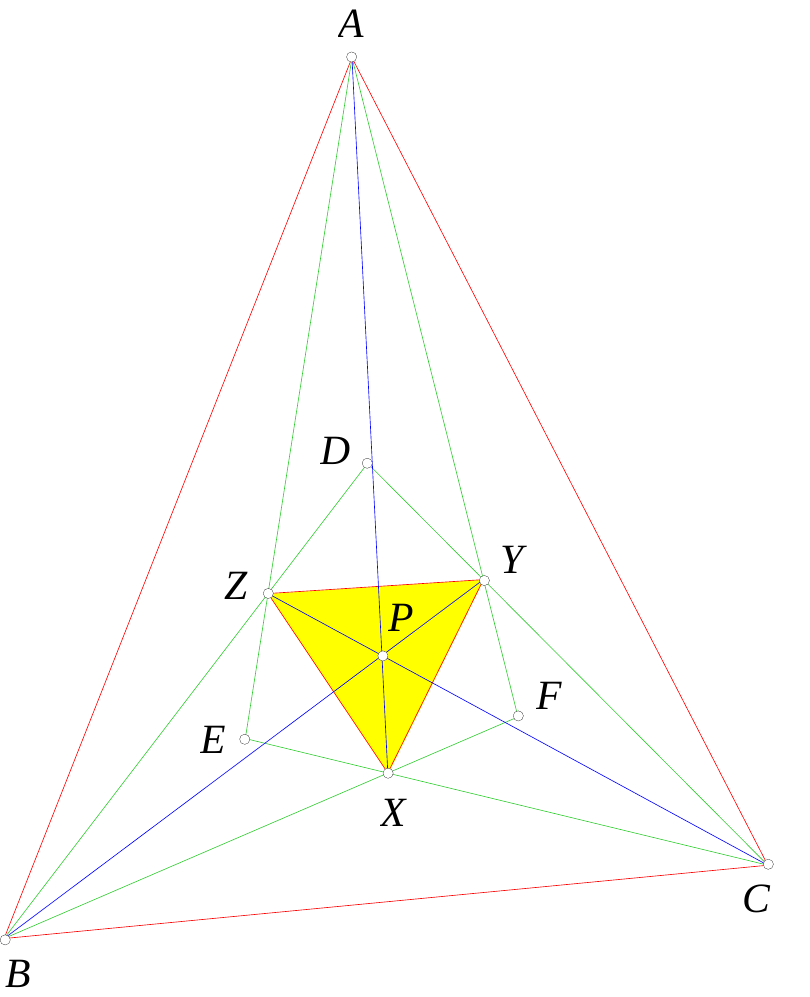}}\end{center}
	\label{fig3}\caption{Perspective triangles}
\end{figure}

\begin{proof}[Proof of theorem \ref{thm3}]Without loss of generality, assume that the sidelengths of the equilateral triangle $XYZ$ is $1$. Therefore, in barycentric coordinates
	$$X = \left( {1:0:0} \right),\ Y = \left( {0:1:0} \right), Z = \left( {0:0:1} \right).$$
	
	Because $D$, $E$, and $F$ lie on perpendicular bisector of sides $BC$, $CA$, and $AB$, respectively, assume that coordinates of $D$, $E$, and $F$ as follows:
	$$D = \left( { - p:1:1} \right),\ E = \left( {1: - q:1} \right),\ F = \left( {1:1: - r} \right).$$ 
	
	Now using the equation of lines \cite{5}, we find that
	$$A = \left( { - 1:q:r} \right),\ B = \left( {p: - 1:r} \right),\ C = \left( {p:q: - 1} \right).$$
	
	Obviously, the lines $AX$, $BY$, and $CZ$ concur at the point $P\left( {p:q:r} \right)$. This finishes the proof.
\end{proof}

\section{Converse construction} In this section, some newly discovered equilateral triangles based on a given arbitrary triangle are found. 

Now coming back to Theorem \ref{thm2}, even though it is really a generalization of Theorem \ref{thm1} and has been proven, we only see one possible case that is Morley's Theorem. That will be make less sense if we only see one application of generalized theorem which is Theorem \ref{thm1}. In order to exclude the objection that it can not find a triangle $XYZ$, satisfying the conditions of the generalized theorem except only the case of Morley's triangle, we now show a converse construction with giving a purely synthetic proof.

\begin{theorem}[Converse construction]\label{thm4}Arbitrary isosceles triangles $DYZ$, $EZX$, and $FXY$ are constructed outwardly of an equilateral triangle $XYZ$ with bases the sides of $XYZ$, such that the pairs of lines $(EZ,FY)$, $(FX,DZ)$, and $(DY,EX)$ meet at $A$, $B$, and $C$ in the same place with $D$, $E$, and $F$, respectively, relative to the sides of $XYZ$. Then 
	$$\angle BXC = 120^ \circ + \angle ZAY,\ \angle CYA = 120^ \circ + \angle XBZ,\ \angle AZB = 120^ \circ + \angle YCX.$$
\end{theorem}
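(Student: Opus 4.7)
The plan is to mirror the angle-chase carried out in the proof of Theorem \ref{thm2}, now read forward in the direct configuration of Theorem \ref{thm4}. Set $\angle YDZ=2\delta$, $\angle ZEX=2\epsilon$, $\angle XFY=2\phi$, so that the base angles of the three outward isosceles triangles are $90^\circ-\delta$, $90^\circ-\epsilon$, and $90^\circ-\phi$ respectively.

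First I would compute $\angle ZAY$ from the quadrilateral $AEXF$. Because $A$ is required to lie on the opposite side of $YZ$ from $X$ (the same side as $D$), the point $A$ lies on the extension of $EZ$ beyond $Z$ and on the extension of $FY$ beyond $Y$; hence the ray $EA$ coincides with the ray $EZ$ and the ray $FA$ with the ray $FY$. The interior angles of the quadrilateral at $E$ and $F$ are therefore $\angle ZEX=2\epsilon$ and $\angle XFY=2\phi$. Using $\angle YXZ=60^\circ$, the interior angle at $X$, measured through the side of the quadrilateral containing $Y$ and $Z$, is
$$\angle EXZ+\angle ZXY+\angle YXF=(90^\circ-\epsilon)+60^\circ+(90^\circ-\phi)=240^\circ-\epsilon-\phi.$$
The quadrilateral angle sum $\angle ZAY+2\epsilon+(240^\circ-\epsilon-\phi)+2\phi=360^\circ$ then gives $\angle ZAY=120^\circ-\epsilon-\phi$. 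The same argument applied cyclically yields $\angle XBZ=120^\circ-\phi-\delta$ and $\angle YCX=120^\circ-\delta-\epsilon$.

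Next I would compute $\angle BXC$. Since $B$ lies on line $FX$ and $C$ on line $EX$, the same-side placement hypothesis forces $X$ to lie strictly between $B$ and $F$ on one line and strictly between $C$ and $E$ on the other. Consequently $\angle BXC$ is the vertical angle of $\angle FXE$ formed by the two lines $BF$ and $CE$ meeting at $X$, and summing the three angles around $X$ gives
$$\angle BXC=\angle FXE=\angle FXY+\angle YXZ+\angle ZXE=(90^\circ-\phi)+60^\circ+(90^\circ-\epsilon)=240^\circ-\epsilon-\phi,$$
so $\angle BXC=120^\circ+(120^\circ-\epsilon-\phi)=120^\circ+\angle ZAY$, which is the first of the three claimed identities. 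The remaining two follow by the cyclic symmetry of the construction.

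The only real subtlety I anticipate is the configurational book-keeping: verifying that $X$ genuinely lies between $B$ and $F$ (and between $C$ and $E$) so that the vertical-angle identification is valid, and that $240^\circ-\epsilon-\phi$ is the interior (rather than reflex) angle of $AEXF$ at $X$. Both are consequences of the stated position hypotheses, which in effect require the apex angles to be large enough that the lines $EZ,FY$, etc.\ meet on the outward side of each face of $XYZ$; once this is checked, the angle chase above proceeds without obstruction.
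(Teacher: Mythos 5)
Your proof is correct and follows essentially the same angle chase as the paper's: the paper obtains $\angle ZAY=180^\circ-(30^\circ+\epsilon)-(30^\circ+\phi)$ from the triangle $AZY$ and $\angle BXC=360^\circ-(30^\circ+\epsilon)-60^\circ-(30^\circ+\phi)$ by summing the angles around $X$, which is the same computation you package via the quadrilateral $AEXF$ and vertical angles. The configurational caveat you flag at the end (that $240^\circ-\epsilon-\phi$ really is the non-reflex angle $\angle BXC$, i.e.\ that $\epsilon+\phi\ge 60^\circ$ --- something the same-side hypotheses alone do not force) is left equally implicit in the paper's own proof, so you are, if anything, slightly more careful than the source.
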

\begin{figure}[htbp]\begin{center}\scalebox{0.7}{\includegraphics{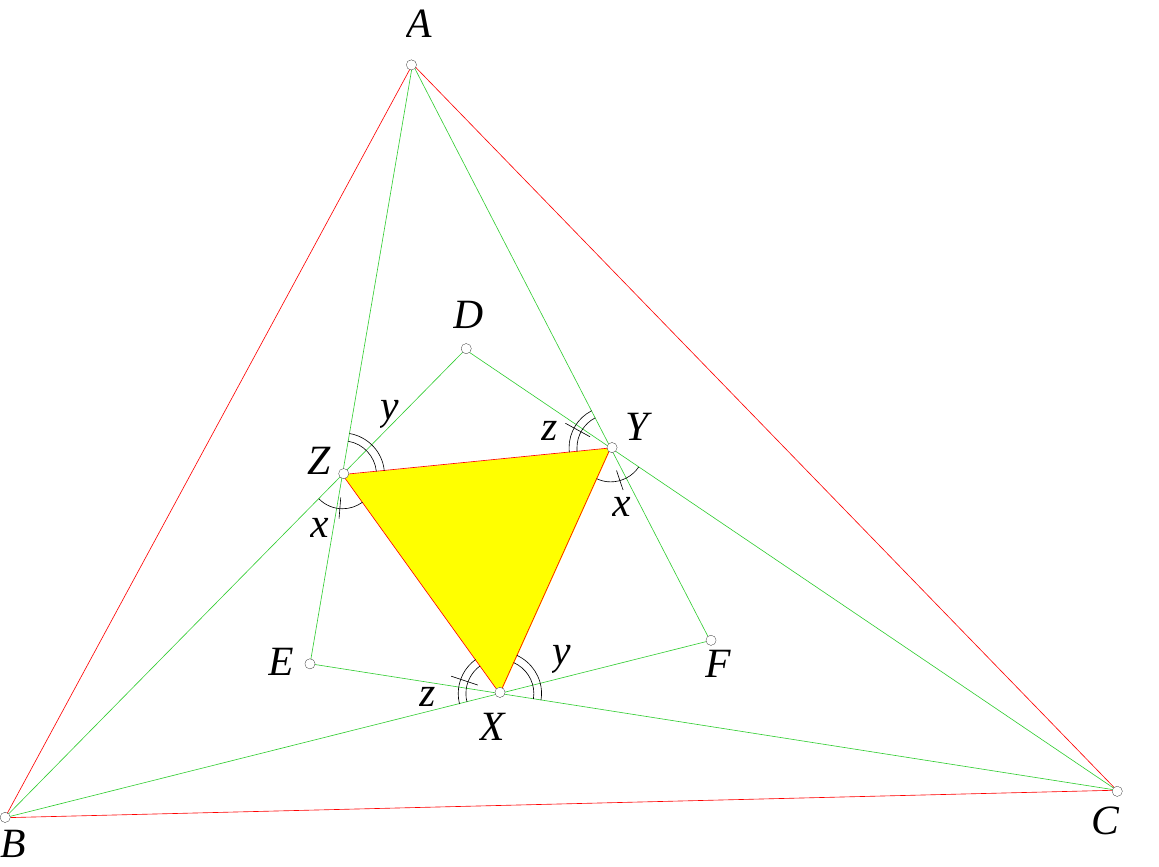}}\end{center}
	\label{fig4}
	\caption{Proof of converse construction}
\end{figure}
\begin{proof}Since $X$ lies on the bisector of $\angle D$, and the sides $XY$, $XZ$ of the equilateral triangle $XYZ$ are equally inclined to the sides $DB$, $DC$ and so we have the equality of angles designated as $x$. Analogously, we have the equality of angles designated $y$ and $z$. So 
	$$\angle BXC = 360^ \circ - y - 60^ \circ - z = 120^ \circ + (180^\circ - y - z) = 120^ \circ + \angle ZAY.$$
	
	Similarly, $\angle CYA = 120^ \circ + \angle XBZ$, $\angle AZB = 120^ \circ + \angle YCX$, this would finish the proof.
\end{proof}

\begin{figure}[htbp]\begin{center}\scalebox{0.7}{\includegraphics{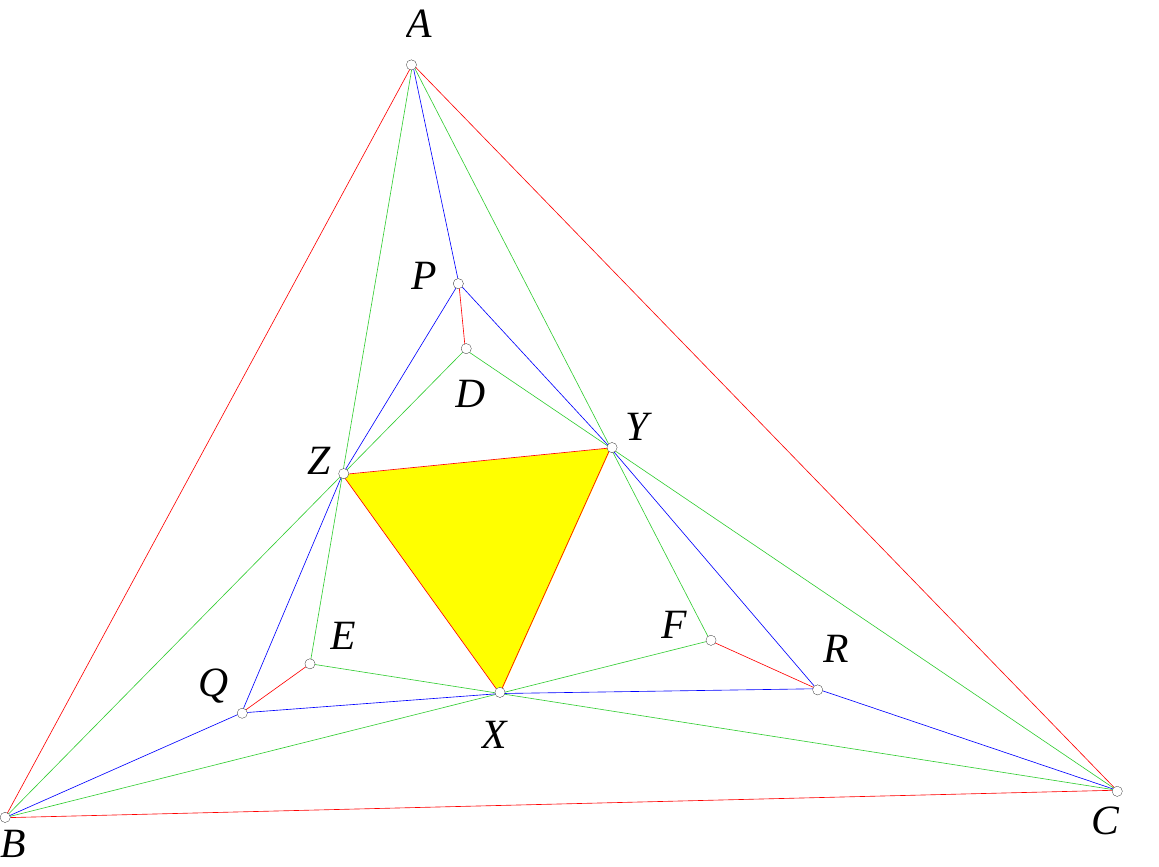}}\end{center}
	\label{fig5}
	\caption{On the configuration of Theorem \ref{thm4}}
\end{figure}
On the configuration of Theorem \ref{thm4}, let $P$, $Q$, and $R$ be the circumcenters of triangles $AYZ$, $BZX$, and $CXY$, respectively. We use angle chasing
$$\angle CXR=\angle BXQ=90^\circ-x,$$
so
$$\angle BXR=\angle QXC=360^\circ-y-z-60^\circ+90^\circ-x=390^\circ-x-y-z.$$

This means that there are six equal angles
$$\angle BXR=\angle CXQ=\angle CYP=\angle AYR=\angle AZQ=\angle BZP.$$

At this point, using above conditions of angles as hypothesis, we propose another extension of Morley's Theorem as follows:

\begin{theorem}[Extension of Morley's Theorem]\label{thm5}Locate the points $X$, $Y$, and $Z$ lying inside a given triangle $ABC$ such that 
	$$\angle BXR=\angle CXQ=\angle CYP=\angle AYR=\angle AZQ=\angle BZP,$$
	where $P$, $Q$, and $R$ are circumcenters of triangles $AYZ$, $BZX$, and $CXY$, respectively, and lying inside that respective triangles. Then triangle $XYZ$ is an equilateral triangle.
\end{theorem}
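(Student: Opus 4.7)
The plan is to derive the two hypotheses of Theorem~\ref{thm2} from the six-equal-angles condition, and then invoke Theorem~\ref{thm2} to conclude. Since $R$ is the circumcenter of $\triangle CXY$, the isosceles triangles $RXC$, $RXY$, $RYC$ yield
\[
\angle RXC = 90^\circ - \angle XYC,\quad \angle RXY = 90^\circ - \angle XCY,\quad \angle RYX = 90^\circ - \angle YCX,
\]
together with their cyclic siblings, and analogous identities hold for $P$ (circumcenter of $\triangle AYZ$) and $Q$ (circumcenter of $\triangle BZX$). The positional hypotheses---that $X, Y, Z$ lie inside $\triangle ABC$ and $P, Q, R$ inside their respective small triangles---fix the cyclic order of the rays around each of $X$, $Y$, $Z$ so as to match the arrangement in the converse construction of Theorem~\ref{thm4}. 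Consequently, each of the six hypothesized angles splits as an ``ambient'' angle (such as $\angle BXC$) plus one of the $90^\circ - (\cdot)$ terms.

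After substitution, the three intra-vertex equalities $\angle BXR = \angle CXQ$, $\angle CYP = \angle AYR$, $\angle AZQ = \angle BZP$ reduce to the angle identities
\[
\angle XYC = \angle XZB,\qquad \angle YZA = \angle YXC,\qquad \angle ZXB = \angle ZYA.
\]
Writing $D = BZ\cap CY$, $E = CX\cap AZ$, $F = AY\cap BX$ as in Theorem~\ref{thm2}, and using that $Y$ lies between $D$ and $C$ on line $CY$ (with analogous orderings at the other configurations, all forced by the positional hypotheses), these identities rewrite as $\angle XYD = \angle XZD$ and cyclic analogs. By the converse of the inscribed-angle theorem, they assert precisely that $D, E, F$ lie on the circumcircle of $\triangle XYZ$. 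Next, I would combine this concyclicity with the two independent cross-vertex equalities (e.g.\ $\angle BXR = \angle CYP$ and $\angle BXR = \angle AZQ$), expanding each as in Step~1 and applying the law of sines in $\triangle DYZ$, $\triangle EZX$, $\triangle FXY$, to obtain that $X$, $Y$, $Z$ are the midpoints of the arcs $YZ$, $ZX$, $XY$ (not containing $D$, $E$, $F$) on the common circle. This is equivalent both to $X, Y, Z$ lying on the bisectors of $\angle BDC, \angle CEA, \angle AFB$ (the second hypothesis of Theorem~\ref{thm2}) and, via angle-sum chasing around each of the three small isosceles triangles, to $\angle BXC = 120^\circ + \angle YAZ$ and its cyclic analogs (the first hypothesis). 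Theorem~\ref{thm2} then immediately gives that $\triangle XYZ$ is equilateral.

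The main obstacle is bridging the intra-vertex and cross-vertex information: the intra-vertex identities alone only yield concyclicity of $D, E, F$ with $X, Y, Z$, while the bisector condition of Theorem~\ref{thm2} is equivalent to the \emph{arc-midpoint} condition, i.e.\ to $|XY| = |XZ|$ together with the already-proved angle identity $\angle XYC = \angle XZB$. Thus the cross-vertex equalities must be shown to pin down both the three arc-midpoint conditions and the three angular identities simultaneously; carrying out this combined sine-rule and angle-sum bookkeeping, while keeping the cyclic order of rays unambiguous via the positional hypotheses, is the nontrivial technical step.
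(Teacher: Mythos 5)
Your first step agrees with the paper: using $\angle PYA=90^\circ-\angle YZA$ and its siblings, the three intra-vertex equalities do reduce to $\angle XYC=\angle XZB$, $\angle YZA=\angle YXC$, $\angle ZXB=\angle ZYA$. But the next step contains a genuine error. From $\angle XYD=\angle XZD$ you invoke the converse of the inscribed-angle theorem to conclude that $D$ lies on the circumcircle of $\triangle XYZ$. That converse applies only when $Y$ and $Z$ lie on the \emph{same} side of line $XD$; here $D$ is the apex over the side $YZ$ opposite to $X$, so $Y$ and $Z$ lie on opposite sides of $XD$, and the concyclicity condition for the convex quadrilateral $XYDZ$ would be $\angle XYD+\angle XZD=180^\circ$, not equality. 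Morley's own configuration is a counterexample to your claim: there $\angle XYC=\angle XZB=60^\circ+A/3$, so $\angle XYD=\angle XZD=120^\circ-A/3$, and these sum to $180^\circ$ only when $A=90^\circ$; in general $D$, $E$, $F$ do \emph{not} lie on the circumcircle of $XYZ$. Everything downstream (the ``common circle,'' the arc-midpoint characterization, and the reduction to the hypotheses of Theorem~\ref{thm2}) therefore collapses, and you yourself flag the remaining sine-rule bookkeeping as not carried out, so the argument is also incomplete on its own terms.

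The fix is much shorter than the detour through Theorem~\ref{thm2}, and it is exactly what the paper does: having set $x=\angle XYC=\angle XZB$, $y=\angle YZA=\angle YXC$, $z=\angle ZXB=\angle ZYA$, write each of the six hypothesized angles by going around the relevant vertex, e.g.
\[
\angle BXR=\angle BXC+\angle CXR=\bigl(360^\circ-y-z-\angle X\bigr)+\bigl(90^\circ-x\bigr)=450^\circ-x-y-z-\angle X,
\]
and likewise $\angle CYP=\angle AYR=450^\circ-x-y-z-\angle Y$ and $\angle AZQ=\angle BZP=450^\circ-x-y-z-\angle Z$. The six-fold equality then forces $\angle X=\angle Y=\angle Z$, so $XYZ$ is equilateral, with no appeal to Theorem~\ref{thm2}, to $D,E,F$, or to any concyclicity.
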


\begin{figure}[htbp]\begin{center}\scalebox{0.7}{\includegraphics{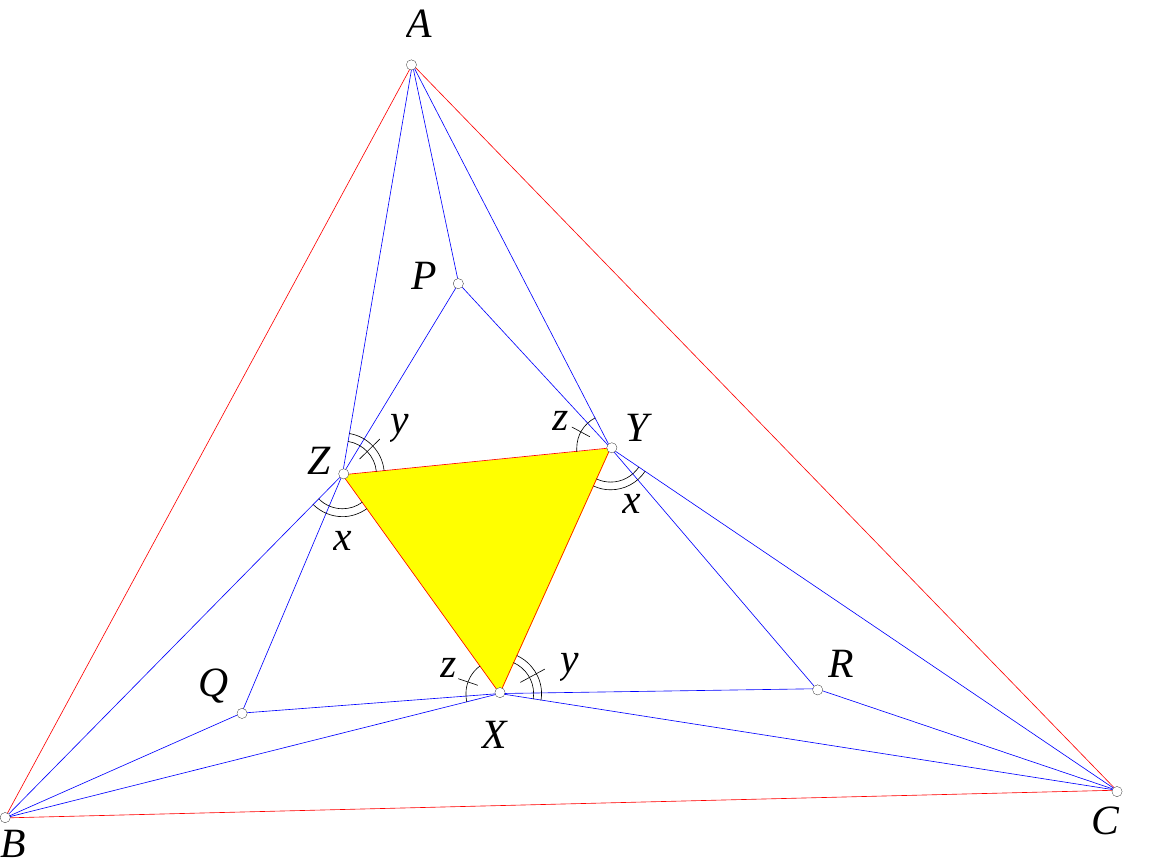}}\end{center}
	\label{fig6}
	\caption{Extension of Morley's Theorem}
\end{figure}

\begin{proof}From the hypothesis we conclude that $\angle CXR = \angle QXB$, this leads to
	$$90^ \circ - \angle XYC = 90^ \circ - \angle BZX,$$ 
	so
	$$\angle XYC = \angle BZX = x.$$
	
	Similarly, we conclude the designation of angles $y$ and angles $z$. From these
	$$\angle BXR = \angle CXX = 360^ \circ - y - z - \angle X + 90^ \circ - x = 450^ \circ- x - y - z - \angle X.$$
	
	Hence, we get the same equalities
	$$\angle CYP=\angle AYR= 450^ \circ - x - y - z - \angle Y,$$
	and
	$$\angle AZQ=\angle BZP= 450^\circ - x - y - z - \angle Z.$$
	
	Thus from six equal angles of the hypothesis, it is easy to show that 
	$$\angle X = \angle Y = \angle Z.$$ 
	
	Therefore, the triangle $XYZ$ is equilateral. The theorem is proven.
\end{proof}

Note that Theorem \ref{thm5} will become Morley's Theorem if adding more the conditions
$$\angle BXR=\angle CXQ=\angle CYP=\angle AYR=\angle AZQ=\angle BZP=150^\circ.$$

Finally, we conclude the article with an interesting consequence of Theorem \ref{thm5} where all six equal angles (in Theorem \ref{thm5}) are $180^\circ$ (see Figure 7).
\begin{figure}[htbp]\begin{center}\scalebox{0.7}{\includegraphics{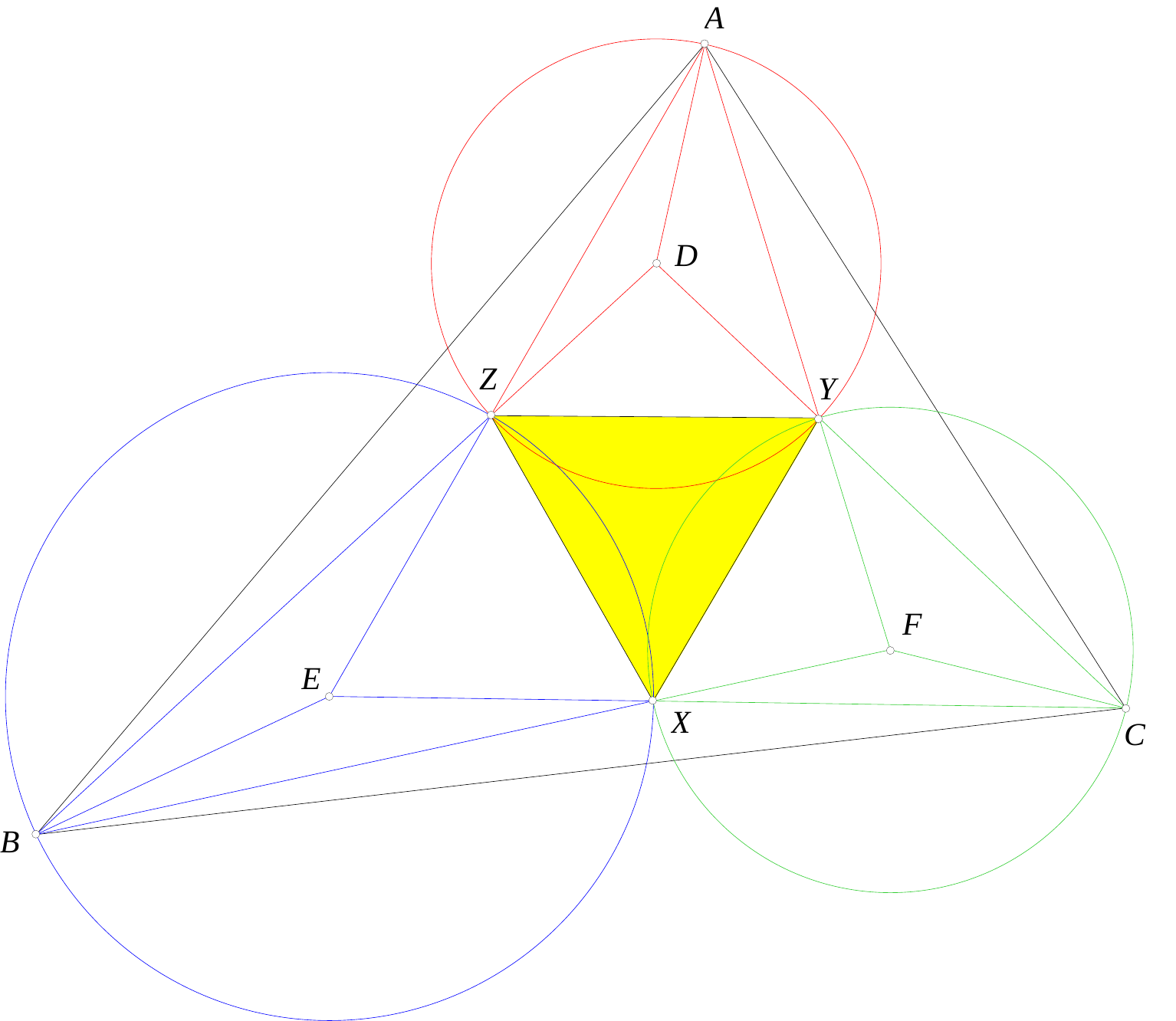}}\end{center}
	\label{fig7}
	\caption{Consequence of Theorem \ref{thm5}}
\end{figure}
\begin{theorem}[Consequence of Theorem \ref{thm5}]\label{thm6}Select three points $X$, $Y$, and $Z$ lying inside a given triangle $ABC$ and satisfying the following conditions
	\begin{itemize}	
		\item $BZ$ and $CY$ meet at circumcenter of triangle $AYZ$. 
		\item $CX$ and $AZ$ meet at circumcenter of triangle $BZX$. 
		\item $AY$ and $BX$ meet at circumcenter of triangle $CXY$. 
	\end{itemize}
Then triangle $XYZ$ is an equilateral triangle. 
\end{theorem}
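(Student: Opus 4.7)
The plan is to recognize Theorem \ref{thm6} as the special case of Theorem \ref{thm5} in which all six equal angles take the value $180^\circ$, as anticipated by the remark preceding the statement. So the proof reduces to verifying that the hypotheses of Theorem \ref{thm5} are fulfilled in the setting of Theorem \ref{thm6}.

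First I would argue that $\angle BZP = \angle CYP = 180^\circ$. Since $P$ is the circumcenter of $\triangle AYZ$ and, by hypothesis, lies on both line $BZ$ and line $CY$, the triples $B,Z,P$ and $C,Y,P$ are collinear. Using that $P$ lies inside $\triangle AYZ$ while $B$ and $C$ lie outside it (because $X,Y,Z$ are interior to $\triangle ABC$), a short check at the vertices $Z$ and $Y$ shows that $Z$ lies between $B$ and $P$ and that $Y$ lies between $C$ and $P$. Hence the rays $ZB,ZP$ (respectively $YC,YP$) are opposite, giving $\angle BZP = \angle CYP = 180^\circ$. The identical argument applied to $Q$ (circumcenter of $\triangle BZX$, lying on $CX$ and $AZ$) and to $R$ (circumcenter of $\triangle CXY$, lying on $AY$ and $BX$) yields $\angle CXQ = \angle AZQ = 180^\circ$ and $\angle AYR = \angle BXR = 180^\circ$. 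Thus all six angles required by Theorem \ref{thm5} are equal, the interior conditions on $P,Q,R$ are built into the hypothesis, and Theorem \ref{thm5} immediately forces $\triangle XYZ$ to be equilateral.

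The main obstacle I expect is the configuration check that distinguishes $180^\circ$ from $0^\circ$: one must confirm that each circumcenter really does sit on the far side of the relevant vertex from $B$, $C$, or $A$, rather than on the same side. This is a local argument at the vertex, comparing the direction of the cevian line (say $BZ$ at $Z$) with the angular cone of the auxiliary triangle ($\angle AZY$) into which the circumcenter must fall; it follows from the interior placement of $X,Y,Z$ inside $\triangle ABC$. I expect this to be the only bookkeeping step that requires care, after which the result is a one-line application of Theorem \ref{thm5}.
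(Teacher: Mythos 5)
Your proposal is correct and matches the paper's intent exactly: the paper offers no separate proof of Theorem \ref{thm6}, presenting it only as the case of Theorem \ref{thm5} in which all six angles equal $180^\circ$, which is precisely your reduction. Your additional bookkeeping (the collinearity of $B,Z,P$ and $C,Y,P$ via the hypothesis, and the betweenness check ruling out the $0^\circ$ degenerate reading) is more careful than anything in the paper and is a welcome supplement, though note that the interiority of $P,Q,R$ required by Theorem \ref{thm5} is not literally ``built into'' the hypothesis of Theorem \ref{thm6} and still deserves a configuration argument.
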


\begin{acknowledgment}The authors would like to express their sincere gratitude and devote the most respect to two deceased mathematicians Alexander Bogomolny and Richard Kenneth Guy who devoted their love and appreciation to the recreational mathematics, and they have also made a great contribution to the development process and introducing the famous theorem of Morley.
\end{acknowledgment}


\begin{thebibliography}{2}
	\bibitem{1} A. Bogomolny, Morley's miracle, Interactive Mathematics Niscellany and Puzzles, \url{https://www. cut-the-knot.org/triangle/Morley/index.shtml}.
	
	\bibitem{2} N. Dergiades, Nikos Dergiades' proof, Interactive Mathematics Miscellany and Puzzles, \url{https://www.cut-the-knot.org/triangle/Morley/Dergiades.shtml}.
	
	\bibitem{6} H. S. M. Coxeter and S. L. Greitzer, \textit{Geometry Revisited}, The Math. Assoc. of America, 1967.
	
	\bibitem{9} J. Strange, A Generalization of Morley's Theorem, \textit{Amer. Math. Monthly}, \textbf{81} no. 1 (1974), pp. 61--63.
	
	\bibitem{10} A. Connes, A new proof of Morley’s theorem, \textit{Publications Math\'ematiques de l'I.H.\'E.S.}, \textbf{88} (1998), pp. 43--46.
	
	\bibitem{3} R. K. Guy, The lighthouse theorem, Morley \& Malfatti: A budget of paradoxes, \textit{Amer. Math. Monthly}, \textbf{114} no. 2 (2007), pp. 97--141.
	
	\bibitem{4} M. Kilic, A New Geometric Proof for Morley's Theorem, \textit{Amer. Math. Monthly}, \textbf{122} no. 4 (2015), pp. 373--376.
	
	\bibitem{5}	P. Yiu, \textit{Introduction to the Geometry of the Triangle}, Florida Atlantic University Lecture Notes, 2001; with corrections, 2013, available at \url{http://math.fau.edu/Yiu/Geometry.html}.
	
	\bibitem{7} Q. H. Tran, A direct trigonometric proof of Morley's Theorem, \textit{International Journal of Geometry} \textbf{8} no. 2 (2019), pp. 46--48.
	
	\bibitem{8} P. Pamfilos, A short proof of Morley's Theorem, \textit{Elem. Math.}, \textbf{74} no. 2 (2019), pp. 80--81.
\end{thebibliography}
\end{document}